\newcommand{\sysn}{\left\{\begin{array}{rcl}}
\newcommand{\sysk}{\end{array}\right.}
\newtheorem{theorem}{Theorem}[section]
\theoremstyle{example}
\theoremstyle{definition}
\newtheorem{definition}[theorem]{Definition}
\newtheorem{corollary}[theorem]{Corollary}
\journal{...}
\begin{document}

\begin{frontmatter}



\title{On some properties of the space of upper semicontinuous functions}


\author[label1]{Alexander V. Osipov}

\ead[label1]{OAB@list.ru}

\address[label1]{Krasovskii Institute of Mathematics and Mechanics, Ural Federal
 University, Ural State University of Economics, 620219, Ekaterinburg, Russia}

\author[label2]{Evgenii G. Pytkeev}

\ead[label2]{pyt@imm.uran.ru}


\address[label2]{Krasovskii Institute of Mathematics and Mechanics, Ural Federal
 University, 620219, Ekaterinburg, Russia}

\begin{abstract}
For a Tychonoff space $X$, we will denote by $USC_{p}(X)$
($B_1(X)$) the set of all real-valued upper semicontinuous
functions (the set of all Baire functions of class 1) defined on
$X$ endowed with the pointwise convergence topology.

In this paper we describe a class of Tychonoff spaces $X$ for
which the space $USC_{p}(X)$ is sequentially separable.
Unexpectedly, it turns out that this class coincides with the
class of spaces for which a stronger form of the sequential
separability for the space $B_1(X)$ holds.


\end{abstract}

\begin{keyword}
 sequentially separable \sep
function space \sep continuous function \sep upper semicontinuous
function \sep Baire function class 1

\MSC[2010] 54C35 \sep 54C30 \sep 54A20  \sep 54H05

\end{keyword}

\end{frontmatter}



\section{Introduction}

If $X$ is a topological space and $A\subseteq X$, then the
sequential closure of $A$,
 denoted by $[A]_{seq}$, is the set of all limits of sequences
 from $A$. A set $D\subseteq X$ is said to be sequentially dense
 if $X=[D]_{seq}$. If $D$ is a countable sequentially dense subset
 of $X$ then $X$ is called sequentially separable space \cite{20,23}.

\label{} Let $X$ be a Tychonoff space. We consider the following
function spaces.

$\bullet$  $C_{p}(X)$ is the set of all real-valued continuous
functions defined on $X$ endowed with the topology of pointwise
convergence.

$\bullet$ $B_1(X)$ is the set of all Baire functions class 1
(i.e., pointwise limits of continuous functions) defined
 on $X$ endowed with the pointwise
convergence topology.

$\bullet$ $USC_p(X)$ is the set $USC(X)=\{f\in \mathbb{R}^X :
f^{-1}((-\infty, r))$ is an open set of $X$ for any $r\in
\mathbb{R} \}$ (i.e. the set all upper semicontinuous functions
defined
 on $X$) endowed with the topology of pointwise
convergence.

\medskip

Note that $C_p(X)\subseteq USC_p(X)\subseteq B_1(X)$ for a
separable metrizable space $X$. It follows that $USC_p(X)$ is
sequentially separable for a separable metrizable space $X$
(Theorem \ref{th22}).

It is well known that $f: X \rightarrow \mathbb{R}$ is a Baire
function if and only if there exists a continuous mapping
$\varphi: X \rightarrow M$ from $X$ onto a separable metrizable
space $M$ and a Borel function $g: M \rightarrow \mathbb{R}$ such
that $f=g\circ \varphi$. If we replace the Borel function by upper
semicontinuous function in this characterization, we obtain the
function $f: X\rightarrow \mathbb{R}$ such that $f^{-1}((-\infty,
r))= \varphi^{-1}(g^{-1}((-\infty, r)))$ is a cozero-set of $X$
for any $r\in \mathbb{R}$.

Define the function space $USC^f(X):=\{f\in \mathbb{R}^X :
f^{-1}((-\infty, r))$ is a cozero set of $X$ for any $r\in
\mathbb{R} \}$. Clearly, that if $X$ is a perfectly normal space
then $USC^f(X)=USC(X)$. We denote by $USC^f_p(X)$ the set
$USC^f(X)$ endowed with the topology of pointwise convergence.

We claim that $USC_p^f(X)$ is sequentially separable if and only
if there exists a countable subset $S$ of $C(X)$ such that
$[S]_{seq}=B_1(X)$ (i.e., when a stronger form of the sequential
separability for the space $B_1(X)$ holds).

\section{Main definitions and notation}

 We recall that a subset of $X$ that is the
 complete preimage of zero for a certain function from~$C(X)$ is called a zero-set.
A subset $O\subseteq X$  is called  a cozero-set of $X$ if
$X\setminus O$ is a zero-set. If a set $Z=\bigcup_{i\in
\mathbb{N}} Z_i$ where $Z_i$ is a zero-set of $X$ for any $i\in
\mathbb{N}$, then $Z$ is called $Z_{\sigma}$-set of $X$. Note that
if a space $X$ is a perfectly normal space, then class of
$Z_{\sigma}$-sets of $X$ coincides with class of $F_{\sigma}$-sets
of $X$. It is well known \cite{rj}, that $f\in B_1(X)$ if and only
if $f^{-1}(G)$ - $Z_{\sigma}$-set for any open set $G$ of real
line $\mathbb{R}$.


\medskip
Recall that the $i$-weight $iw(X)$ of a space $X$ is the smallest
infinite cardinal number $\tau$ such that $X$ can be mapped by a
one-to-one continuous mapping onto a Tychonoff space of the weight
not greater than $\tau$.

\begin{theorem} (Noble's Theorem in \cite{nob}) \label{th31} Let $X$ be a Tychonoff space. A space $C_{p}(X)$ is separable if and only if
$iw(X)=\aleph_0$.
\end{theorem}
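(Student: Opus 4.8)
The plan is to prove both implications by reducing the two conditions to a single combinatorial statement: the existence of a countable family $\{f_n\}_{n\in\mathbb{N}}\subseteq C(X)$ that separates the points of $X$. Such a family exists precisely when the diagonal map $e\colon X\to\mathbb{R}^{\omega}$, $e(x)=(f_n(x))_n$, is a continuous injection, i.e. a condensation of $X$ onto a subspace of $\mathbb{R}^{\omega}$. Since every subspace of $\mathbb{R}^{\omega}$ is a Tychonoff space of weight $\le\aleph_0$, and conversely every Tychonoff space of weight $\le\aleph_0$ is second countable and hence embeds in $\mathbb{R}^{\omega}$, this existence is exactly equivalent to $iw(X)\le\aleph_0$. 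I would state this equivalence first and use it in both directions.

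For the implication $iw(X)=\aleph_0\Rightarrow C_p(X)$ separable, I would fix a point-separating family $\{f_n\}$ as above and form the countable set $S$ of all functions $P(f_{n_1},\dots,f_{n_k})$, where $k,n_1,\dots,n_k\in\mathbb{N}$ and $P$ ranges over polynomials in $k$ variables with rational coefficients. To verify that $S$ is dense in $C_p(X)$, take $h\in C(X)$ together with a basic neighbourhood determined by finitely many points $x_1,\dots,x_m$ and some $\varepsilon>0$. Because the $f_n$ separate points, finitely many of them, say $f_{n_1},\dots,f_{n_k}$, already carry $x_1,\dots,x_m$ to pairwise distinct points of $\mathbb{R}^{k}$; a standard interpolation argument then produces a polynomial, which after rounding coefficients may be assumed rational, agreeing with $h$ on these points to within $\varepsilon$. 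The corresponding element of $S$ lies in the given neighbourhood, so $\overline{S}=C_p(X)$ and $C_p(X)$ is separable.

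For the converse I would start from a countable dense set $D=\{d_n\}\subseteq C_p(X)$ and check that $D$ separates points. Given $x\neq y$, complete regularity of $X$ supplies $h\in C(X)$ with $h(x)=0$ and $h(y)=1$; density forces some $d_n$ into the neighbourhood of $h$ determined by $\{x,y\}$ and $\varepsilon=1/2$, whence $d_n(x)\neq d_n(y)$. Thus $D$ is a countable point-separating family, and by the opening remark $iw(X)\le\aleph_0$; as $iw$ is by definition an infinite cardinal, $iw(X)=\aleph_0$.

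The main obstacle is the density step in the first implication: one must upgrade "separating points" to "approximating every continuous function on every finite set." The actual content lives in the interpolation on the finitely many distinct images in $\mathbb{R}^{k}$, and some care is required both to reduce to finitely many of the $f_n$ and to replace arbitrary real coefficients by rational ones so as to stay inside the countable set $S$.
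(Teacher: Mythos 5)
Your proposal is correct, but note at the outset that the paper contains no proof of this statement to compare against: it is quoted as Noble's theorem with a citation to \cite{nob}, and is used as a black box (e.g.\ in the proof of Theorem \ref{th22}, via the analogous Pestriakov result). So your argument in effect supplies the missing classical proof, and it is essentially the standard one. The key reduction is sound: $iw(X)\le\aleph_0$ is equivalent to the existence of a countable point-separating family in $C(X)$, since a condensation onto a second-countable Tychonoff (hence separable metrizable) space $Y\subseteq\mathbb{R}^{\omega}$ yields the coordinate compositions, and conversely the diagonal map $e(x)=(f_n(x))_n$ is a condensation onto $e(X)$. The interpolation step you flag as the main obstacle does go through: after discarding repetitions one may assume $x_1,\dots,x_m$ pairwise distinct; choosing for each pair $i\ne j$ an index whose function separates $x_i$ from $x_j$ gives finitely many $f_{n_1},\dots,f_{n_k}$ sending the $x_i$ to distinct points $p_1,\dots,p_m\in\mathbb{R}^k$, and a Lagrange-type polynomial $P(t)=\sum_i h(x_i)\prod_{j\ne i}\left(t_{\ell(i,j)}-(p_j)_{\ell(i,j)}\right)/\left((p_i)_{\ell(i,j)}-(p_j)_{\ell(i,j)}\right)$, where $\ell(i,j)$ is a coordinate separating $p_i$ from $p_j$, interpolates exactly; rational rounding of coefficients is legitimate because evaluation at the finitely many fixed points $p_i$ is continuous in the finitely many coefficients. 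The converse direction via complete regularity and the neighbourhood determined by $\{x,y\}$ and $\varepsilon=1/2$ is the standard argument and is fine, as is the remark that $iw$ is by definition infinite. For comparison, Noble's original route avoids the polynomial algebra: given the condensation $f:X\to Y$, the set $\{g\circ f:g\in C(Y)\}$ is dense in $C_p(X)$ (one interpolates exactly at the distinct points $f(x_i)$ in the Tychonoff space $Y$), and $C_p(Y)$ is separable because $Y$ has a countable network; your version trades that appeal to cosmicity of $C_p(Y)$ for an explicit countable dense set, which makes it more self-contained.
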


\begin{theorem} \label{th30} (Pestriakov's Theorem in \cite{ps}). Let $X$ be a Tychonoff space. A space $B_{1}(X)$ is separable if and only if $iw(X)=\aleph_0$.
\end{theorem}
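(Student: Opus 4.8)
The plan is to prove the two implications separately, leaning on Noble's Theorem (Theorem \ref{th31}) for the easy direction and on a direct construction for the converse. Throughout I use that $C(X)\subseteq B_1(X)$ and, more importantly, that $C(X)$ is \emph{dense} in $B_1(X)$: every Baire class $1$ function is by definition a pointwise limit of a sequence from $C(X)$, so $B_1(X)\subseteq \overline{C(X)}^{\,\mathbb{R}^X}$ while $C(X)\subseteq B_1(X)$.

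For the implication $iw(X)=\aleph_0 \Rightarrow B_1(X)$ separable, I would first invoke Theorem \ref{th31} to obtain a countable set $S_0$ dense in $C_p(X)$. Since $C(X)$ is dense in $B_1(X)$ by the remark above, transitivity of density gives that $S_0$ is dense in $B_1(X)$, so $B_1(X)$ is separable. This direction is routine once density of $C(X)$ in $B_1(X)$ is recorded.

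The converse, $B_1(X)$ separable $\Rightarrow iw(X)=\aleph_0$, is the heart of the matter. Let $S=\{f_k : k\in\omega\}$ be a countable dense subset of $B_1(X)$. The tempting move is to deduce that the dense subspace $C_p(X)$ of $B_1(X)$ is itself separable and then apply Theorem \ref{th31}; this is exactly the trap, because dense subspaces of a separable space need not be separable, so separability does not automatically pass down to $C_p(X)$. Instead I would replace the Baire-$1$ members of $S$ by continuous approximants. First one checks that $S$ separates points of $X$: given $x\neq y$, Tychonoff-ness yields $g\in C(X)$ with $g(x)\neq g(y)$, the set $\{h\in B_1(X): |h(x)-h(y)|>\frac{1}{2}|g(x)-g(y)|\}$ is a nonempty open subset of $B_1(X)$ (it contains $g$ and depends only on the two coordinates $x,y$), so density produces some $f_k\in S$ with $f_k(x)\neq f_k(y)$. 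For each $k$ fix continuous functions $f_{k,n}$ with $f_{k,n}\to f_k$ pointwise, and set $\mathcal{F}=\{f_{k,n}: k,n\in\omega\}\subseteq C(X)$, a countable family.

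It remains to see that $\mathcal{F}$ again separates points and to convert this into the $i$-weight bound. If $f_k(x)\neq f_k(y)$, then $f_{k,n}(x)\to f_k(x)$ and $f_{k,n}(y)\to f_k(y)$ force $f_{k,n}(x)\neq f_{k,n}(y)$ for all large $n$, so some member of $\mathcal F$ separates $x$ and $y$. Feeding $\mathcal{F}$ into the diagonal map $\Delta_{\mathcal F}\colon X\to \mathbb{R}^{\mathcal F}$, $\Delta_{\mathcal F}(x)=(f(x))_{f\in\mathcal F}$, I obtain a one-to-one (by point-separation) continuous (each coordinate is continuous) map of $X$ onto a subspace of the second-countable space $\mathbb{R}^{\mathcal F}$; that image is a Tychonoff space of weight $\le\aleph_0$, whence $iw(X)\le\aleph_0$ and therefore $iw(X)=\aleph_0$. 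I expect the main obstacle to be precisely the conceptual one flagged above, namely resisting the false shortcut through $C_p(X)$ and instead passing to the continuous approximants; the one computational point needing care is the verification that point-separation survives this passage, after which the translation between a countable separating family in $C(X)$ and the bound $iw(X)=\aleph_0$ is standard.
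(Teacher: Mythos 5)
Your proof is correct, but note that the paper does not actually prove this statement: Theorem \ref{th30} is imported from Pestriakov \cite{ps} as a black box, so your argument is compared against the standard proof rather than anything in the text. Both directions of your proposal hold up. The forward direction is fine: $C(X)$ is dense in $B_1(X)$ because pointwise convergence is exactly convergence in the topology of $\mathbb{R}^X$, so every Baire-$1$ function lies in the closure of $C(X)$, and transitivity of density applied to a countable dense subset of $C_p(X)$ (supplied by Noble's Theorem \ref{th31}) gives separability of $B_1(X)$. The converse is the substantive part, and you correctly identify and avoid the trap --- a dense subspace of a separable space need not be separable, so one cannot simply pass from separability of $B_1(X)$ down to $C_p(X)$. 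Your workaround is exactly the right one: a countable dense $S\subseteq B_1(X)$ separates points (your open-set argument in the two coordinates $x,y$ is valid, since $h\mapsto(h(x),h(y))$ is continuous on $\mathbb{R}^X$ and the set in question contains the continuous separating function $g$), point-separation survives passage to the continuous approximants $f_{k,n}$ by an eventual-inequality argument, and the diagonal map into $\mathbb{R}^{\mathcal F}$ then condenses $X$ onto a subspace of a second-countable Tychonoff space, giving $iw(X)=\aleph_0$. In effect your converse direction adapts the classical proof of Noble's theorem itself (countable dense set $\Rightarrow$ countable point-separating family $\Rightarrow$ condensation into $\mathbb{R}^{\omega}$) to the Baire-$1$ setting by inserting the approximation step; what this buys is a self-contained proof of Pestriakov's result using only Theorem \ref{th31} and the definition of $B_1(X)$, where the paper itself offers only a citation.
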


\begin{definition} A Tychonoff space $X$ has  the Velichko property ($X$ $\models$ $V$), if there
 exists  a condensation (one-to-one continuous mapping) $f: X \mapsto Y$ from the space $X$ on a
 separable metric space $Y$, such that $f(U)$ is an $F_{\sigma}$-set
 of $Y$ for any cozero-set $U$ of $X$.
\end{definition}

\begin{theorem} \label{th38} (Velichko \cite{vel}). Let $X$ be a Tychonoff space. A space $C_p(X)$ is
sequentially separable if and only if  $X$ $\models$ $V$.
\end{theorem}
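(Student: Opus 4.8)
The plan is to prove both implications by moving back and forth between continuous functions on $X$ and first Baire class functions on a separable metrizable image of $X$, using that on a metrizable (hence perfectly normal) space the quoted $Z_{\sigma}$-criterion for $B_1$ becomes: $g\in B_1(Y)$ iff $g^{-1}(G)$ is an $F_{\sigma}$-set of $Y$ for every open $G\subseteq\mathbb{R}$. For necessity, suppose $D=\{h_n:n\in\mathbb{N}\}\subseteq C(X)$ is sequentially dense in $C_p(X)$. First I would check that the $h_n$ separate points: if $x\neq x'$, choose $f\in C(X)$ with $f(x)\neq f(x')$ and a sequence from $D$ converging pointwise to $f$, so that some $h_n$ already separates $x$ and $x'$. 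Hence $\varphi=(h_n)_n\colon X\to\mathbb{R}^{\omega}$ is a condensation onto a separable metrizable space $Y=\varphi(X)$. Given a cozero-set $U=\{f\neq 0\}$ with $f\in C(X)$, write $f=\lim_k h_{n_k}$ pointwise; then $\tilde f:=f\circ\varphi^{-1}$ is the pointwise limit on $Y$ of the coordinate projections $\pi_{n_k}|_Y\in C(Y)$, so $\tilde f\in B_1(Y)$, and $\varphi(U)=\tilde f^{-1}(\mathbb{R}\setminus\{0\})$ is therefore $F_{\sigma}$ in $Y$. Thus $\varphi$ witnesses $X\models V$.

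For sufficiency, let $\varphi\colon X\to Y$ be a condensation onto a separable metrizable $Y$ carrying cozero-sets to $F_{\sigma}$-sets. For $h\in C(X)$ and an open interval $(a,b)$ the set $h^{-1}((a,b))$ is cozero, so $\varphi(h^{-1}((a,b)))=(h\circ\varphi^{-1})^{-1}((a,b))$ is $F_{\sigma}$ in $Y$; hence $\tilde h:=h\circ\varphi^{-1}\in B_1(Y)$. This reduces everything to the following \emph{key lemma}: for a separable metrizable space $Y$ there is a countable $E\subseteq C(Y)$ with $[E]_{seq}=B_1(Y)$. Granting it, put $S=\{g\circ\varphi:g\in E\}\subseteq C(X)$; for each $h\in C(X)$ a sequence $g_k\in E$ with $g_k\to\tilde h$ pointwise on $Y$ pulls back to $g_k\circ\varphi\to h$ pointwise on $X$, so $S$ is a countable sequentially dense subset and $C_p(X)$ is sequentially separable.

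To prove the key lemma I would embed $Y$ in the Hilbert cube $Q=[0,1]^{\omega}$. On the compact space $Q$ the countable algebra $E_Q$ of rational-coefficient polynomials in the coordinates is \emph{uniformly} dense in $C(Q)$ by Stone--Weierstrass, and this makes a diagonal argument succeed: if $F\in B_1(Q)$ is the pointwise limit of $G_n\in C(Q)$ and $p_n\in E_Q$ satisfies $\|p_n-G_n\|_{\infty}<1/n$, then $p_n\to F$ pointwise on all of $Q$, so $[E_Q]_{seq}=B_1(Q)$. To descend to $Y$ I would invoke the classical Baire-class-preserving extension theorem (Kuratowski): after reducing to bounded functions via a homeomorphism $\mathbb{R}\cong(-1,1)$ and truncating to keep approximants in range, every function in $B_1(Y)$ extends to one in $B_1(Q)$. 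Restricting the polynomial approximants of such an extension to $Y$ exhibits each element of $B_1(Y)$ as a pointwise limit of members of $E:=\{p|_Y:p\in E_Q\}$, while $E\subseteq C(Y)$ forces $[E]_{seq}\subseteq B_1(Y)$, giving equality.

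The main obstacle is precisely this lemma, and more sharply the demand for \emph{everywhere} (not merely densely) convergence of the approximating sequence: pointwise density of a countable family in $C_p(Y)$ is not enough to guarantee that its sequential closure fills out $B_1(Y)$, since the diagonalization controls only finitely many points at each stage. The two devices that defeat this — uniform Stone--Weierstrass approximation on the compactification $Q$ (to drive the diagonal) and a Baire-class-one extension from $Y$ to $Q$ (to reach the compact case) — are where the genuine work lies; the passage from bounded to unbounded functions is a routine composition with $\mathbb{R}\cong(-1,1)$.
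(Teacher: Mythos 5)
The paper does not actually prove this statement: Theorem \ref{th38} is imported from Velichko \cite{vel} as a black box, so there is no in-paper argument to compare yours against; what I can say is that your reconstruction is correct and is a natural proof of Velichko's theorem. Your necessity half uses the right trick: the condensation $\varphi=(h_n)_n$ is manufactured from the sequentially dense set itself, so that any $f\in C(X)$ defining a cozero-set becomes Baire class 1 on $Y=\varphi(X)$ (being the pointwise limit of the coordinate projections $\pi_{n_k}|_Y$), whence $\varphi(U)=\tilde f^{-1}(\mathbb{R}\setminus\{0\})$ is $F_{\sigma}$ in $Y$; the point-separation check for injectivity is also needed and you supply it. Your sufficiency half runs exactly parallel to the technique the paper does display, in the proof of Theorem \ref{th22}, $(2)\Rightarrow(1)$: condense onto $Y$, invoke a countable $E\subseteq C(Y)$ with $[E]_{seq}=B_1(Y)$, and pull back along the condensation. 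That key lemma is precisely the strengthening the paper records in the remark after Theorem \ref{th32} (``the proof gives more''), and your Lebesgue--Hausdorff step ($F_{\sigma}$ preimages of open sets imply Baire class 1 on metrizable $Y$) is the metrizable case of the $Z_{\sigma}$-criterion the paper quotes from \cite{rj}, where $Z_{\sigma}=F_{\sigma}$ by perfect normality.

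One small wrinkle in your proof of the key lemma: as literally written, $E=\{p|_Y : p\in E_Q\}$ does not mesh with the bounded reduction, because after truncating the polynomial approximants and composing with $t^{-1}\colon(-1,1)\to\mathbb{R}$ the resulting functions are no longer restrictions of rational polynomials, so they escape your fixed countable family. The fix is cosmetic: either enlarge $E$ to the still-countable family $\{t^{-1}\circ\tau_m\circ p|_Y : p\in E_Q,\ m\in\mathbb{N}\}$ with $\tau_m$ the truncation at $\pm(1-1/m)$, or, cleaner, confine the $\mathbb{R}\cong(-1,1)$ reduction entirely to the interior of the Kuratowski extension step so as to obtain a real-valued extension $\tilde F\in B_1(Q)$, and then run the Stone--Weierstrass diagonal directly on $\tilde F$: the pointwise estimate $|p_n(q)-\tilde F(q)|\le 1/n+|G_n(q)-\tilde F(q)|$ needs no boundedness of $\tilde F$, so the raw $E$ suffices. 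With that adjustment the argument is complete, modulo the two classical inputs you correctly name (Stone--Weierstrass on the Hilbert cube and Baire-class-preserving extension from a subset of a metric space).
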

\medskip

\begin{theorem} (\cite{vel}) \label{th32} A space $B_1(X)$ is
sequentially separable for any separable metric space $X$.
\end{theorem}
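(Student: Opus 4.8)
The plan is to exhibit a single countable set $D\subseteq B_1(X)$ with $[D]_{seq}=B_1(X)$. Since $X$ is separable metrizable, $iw(X)=w(X)=\aleph_0$, so by Noble's Theorem (Theorem \ref{th31}) $C_p(X)$ is separable; moreover $X\models V$ (take the identity condensation, recalling that in a metric space every cozero-set is open and every open set is $F_\sigma$), so by Velichko's Theorem (Theorem \ref{th38}) $C_p(X)$ is sequentially separable. Fix a countable base $\{B_n\}$ of $X$ and continuous functions $\varphi_n$ with $B_n=\{\varphi_n>0\}$, let $S\subseteq C(X)$ be the countable $\mathbb{Q}$-subalgebra and sublattice generated by $\{\varphi_n\}$ and the rational constants, and enlarge it so that $[S]_{seq}=C_p(X)$. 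Finally put $D=S\cup\{\chi_V: V \text{ a finite union of sets } B_n\}$; each such $\chi_V$ is lower semicontinuous, hence lies in $B_1(X)$, so $D$ is a countable subset of $B_1(X)$. Because a function is of Baire class $1$ exactly when it is a pointwise limit of continuous functions, $B_1(X)=[C_p(X)]_{seq}=[[S]_{seq}]_{seq}$, so the whole problem reduces to collapsing this iterated sequential closure down to $[D]_{seq}$.

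First I would record two clean approximation facts. Continuous functions already lie in $[S]_{seq}\subseteq[D]_{seq}$. Next, for a set $A$ that is simultaneously $F_\sigma$ and $G_\delta$, write $A=\bigcup_k F_k=\bigcap_k U_k$ with $F_k$ closed increasing and $U_k$ open decreasing; using second countability one sandwiches $F_j\subseteq V_j\subseteq U_j$ by a finite union $V_j$ of basic sets (letting the number of pieces grow while their diameters shrink), and then $\chi_{V_j}\to\chi_A$ pointwise, so $\chi_A\in[D]_{seq}$. Combining this with the $B_1$-characterization recalled in Section 2 ($f^{-1}(G)$ is $Z_\sigma=F_\sigma$ for open $G$), for $f\in B_1(X)$ the truncated dyadic roundings $f_m$ converge to $f$ uniformly, and each $f_m$ is a finite $\mathbb{Q}$-combination of characteristic functions of preimages $f^{-1}(\text{interval})$.

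The main obstacle is exactly the collapse of iterated pointwise limits into a single convergent sequence drawn from the fixed countable set $D$. The clean interpolation above disposes only of indicators of ambiguous (simultaneously $F_\sigma$ and $G_\delta$) sets; a general $f\in B_1(X)$ produces, after quantization, indicators of sets that are only $F_\sigma$ (or Boolean combinations one step higher), and for a genuine $F_\sigma$ set $A=\bigcup_k F_k$ one has $\chi_{F_k}\uparrow\chi_A$ together with $\chi_{F_k}=\lim_j\chi_{V_{k,j}}$, a double limit whose diagonal is not formal: the inner convergence $\chi_{V_{k,j}}\to\chi_{F_k}$ is not uniform in $k$, and since $X$ is uncountable one cannot choose a single diagonal index controlling all points $x\notin A$ at once. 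This is the heart of the argument. I would overcome it by exploiting the extra structure available here rather than abstract sequential nonsense: the outer quantization limit is uniform, the inner approximations are monotone and semicontinuous, and second countability lets one transfer all the required control from the uncountable $X$ to the countable base $\{B_n\}$; a fast-growth diagonal selection indexed by the base (the ``grow-and-shrink'' device used in the interpolation, iterated along the finite Borel levels produced by quantization) then yields, for each $f$, a single sequence from $D$ converging pointwise to $f$. Equivalently, one may condense $X$ into a fixed universal separable metric space and build there one explicit self-diagonalizing countable family, pulling it back along the condensation furnished by $X\models V$.

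Carrying out the selection gives $B_1(X)\subseteq[D]_{seq}$, and as $D\subseteq B_1(X)$ is countable this proves that $B_1(X)$ is sequentially separable.
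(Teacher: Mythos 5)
You have correctly located the heart of the matter yourself (``the diagonal is not formal''), but neither of your two devices survives scrutiny, so the proposal has genuine gaps. First, the interpolation lemma is defective both in proof and in statement. The sandwich $F_j\subseteq V_j\subseteq U_j$ with $V_j$ a finite union of basic sets is impossible whenever $F_j$ is closed but not totally bounded (take $F_j=\mathbb{Z}\subseteq\mathbb{R}$ with the base of rational balls: any finite union is bounded), and the ``grow-and-shrink'' parenthetical does not restore the claimed containments. Worse, the conclusion itself fails for your $D$: in $X=\mathbb{N}^{\mathbb{N}}$ with the cylinder base, let $A=\{x: x_{2n}=1 \text{ for all } n\}$, a closed nowhere dense copy of $\mathbb{N}^{\mathbb{N}}$, hence ambiguous (simultaneously $F_\sigma$ and $G_\delta$) but not $\sigma$-compact. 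If $\chi_{V_j}\to\chi_A$ pointwise with each $V_j$ a finite union of cylinders, set $W_N=\bigcap_{j\geqslant N}V_j$, so $A=\bigcup_N W_N$. Each $W_N$ is compact: if a node $s$ of the tree of $W_N$ had infinitely many successors, then for each $j$ infinitely many branches of $W_N$ diverging just past $s$ would lie in one of the finitely many cylinder pieces of $V_j$, forcing that piece to be $[s\restriction m]$ with $m\leqslant |s|$; pigeonholing over $j$ gives a fixed $m$ with $[s\restriction m]\subseteq V_j$ for infinitely many $j$, and then any $z\in[s\restriction m]\setminus A$ (nonempty, as $A$ is nowhere dense) satisfies $\chi_{V_j}(z)=1$ infinitely often, contradicting $\chi_{V_j}(z)\to 0$. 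Thus $A$ would be $\sigma$-compact, a contradiction. So indicators of ambiguous sets are in general unreachable from the indicator part of your $D$, and your ``clean approximation fact'' is false as stated.

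Second, the central collapse of the iterated sequential closure is never actually performed: ``a fast-growth diagonal selection indexed by the base'' names the difficulty rather than resolving it, and by the counterexample above no such selection can work with this $D$. For comparison: the paper offers no proof at all of this theorem (it cites Velichko), but its remark immediately afterwards --- that the proof yields a countable $S\subseteq C(X)$ with $[S]_{seq}=B_1(X)$ --- points to the standard argument, which avoids indicators entirely. One reduces to $f$ with values in $(0,1)$ via a homeomorphism $\mathbb{R}\to(0,1)$, embeds $X$ into the Hilbert cube $Q$, and takes $S$ to be the restrictions to $X$ of polynomials with rational coefficients in finitely many coordinates, uniformly dense in $C(Q)$ by Stone--Weierstrass. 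Extending $f\in B_1(X)$ to $F\in B_1(Q)$ (Kuratowski's Baire-one extension theorem), writing $F=\lim_k G_k$ with $G_k\in C(Q)$, and choosing $s_k\in S$ with $\sup_Q|s_k-G_k|<1/k$, one gets $s_k\restriction X\to f$ pointwise. The device you were missing is exactly this: uniform approximation from a single countable set on a compactification turns the double limit into one limit, something sequential density of $S$ in $C_p(X)$ alone cannot do (indeed $C_b(X)$ is typically nonseparable in the uniform norm, which is why the detour through $Q$ is essential). Your closing sentence about condensing into a universal space gestures toward this route but does not carry it out.
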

\medskip

Note that proof of this theorem  gives more, namely that there
exists a countable subset $S\subset C_p(X)$, such that
$[S]_{seq}=B_1(X)$.

Hence, a space $USC_p(X)$  is sequentially separable for any
separable metric space $X$.

\medskip

In \cite{ospy}, Osipov and Pytkeev have established criterion for
$B_{1}(X)$ to be sequentially separable.

\begin{definition} A space $X$ has {\bf $OP$-property} ($X$ $\models$ $OP$), if there
 exists a bijection $\varphi: X \mapsto Y$ from a space $X$ onto a
 separable metrizable space $Y$, such that

\begin{enumerate}

\item $\varphi^{-1}(U)$ is a $Z_{\sigma}$-set of $X$ for any open
set $U$ of $Y$;

\item  $\varphi(T)$ is an $F_{\sigma}$-set of $Y$ for any zero-set
$T$ of $X$.

\end{enumerate}
\end{definition}

\begin{theorem}(Theorem 3.1 in \cite{ospy})\label{th6}
 A function space $B_1(X)$ is sequentially separable if and only if  $X$ $\models$ $OP$.
\end{theorem}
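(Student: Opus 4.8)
The plan is to prove both implications by transferring the problem, through a suitable bijection, between $B_1(X)$ and $B_1(Y)$ for a separable metric space $Y$, where sequential separability is already available (Theorem \ref{th32} together with the remark after it). Throughout I use the characterization recalled above: $f\in B_1(Z)$ iff $f^{-1}(G)$ is a $Z_{\sigma}$-set for every open $G\subseteq\mathbb{R}$, and that for metric $Y$ the $Z_{\sigma}$-sets coincide with the $F_{\sigma}$-sets, so that $B_1(Y)$ is the usual Baire class one. I also use that the class of $Z_{\sigma}$-sets is closed under finite intersections and countable unions.

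For sufficiency ($X\models OP \Rightarrow B_1(X)$ sequentially separable) I would fix the bijection $\varphi\colon X\to Y$ with properties (1),(2) and a countable $S\subseteq C(Y)$ with $[S]_{seq}=B_1(Y)$, and set $S'=\{s\circ\varphi : s\in S\}$. First, $S'\subseteq B_1(X)$: for $s\in C(Y)$ and $G$ open, $(s\circ\varphi)^{-1}(G)=\varphi^{-1}(s^{-1}(G))$ is $Z_{\sigma}$ by property (1). Second, $S'$ is sequentially dense: given $f\in B_1(X)$ put $h=f\circ\varphi^{-1}$, so that $h^{-1}(G)=\varphi(f^{-1}(G))$ for open $G$; writing the $Z_{\sigma}$-set $f^{-1}(G)$ as a countable union of zero-sets, property (2) makes $h^{-1}(G)$ an $F_{\sigma}$-set, whence $h\in B_1(Y)$. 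Choosing $s_j\in S$ with $s_j\to h$ pointwise on $Y$ yields $s_j\circ\varphi\to f$ pointwise on $X$, so $f\in[S']_{seq}$.

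For necessity, let $D=\{g_n:n\in\mathbb{N}\}$ be a countable sequentially dense subset of $B_1(X)$ and consider the evaluation map $\varphi=(g_n)_n\colon X\to\mathbb{R}^{\omega}$ with $Y:=\varphi(X)$. Then $Y$ is separable metrizable as a subspace of $\mathbb{R}^{\omega}$, and $\varphi$ is a bijection onto $Y$ because $D$ separates points: if $x\neq y$, pick $c\in C(X)\subseteq B_1(X)$ with $c(x)\neq c(y)$; some term of a sequence from $D$ converging to $c$ still separates $x$ and $y$. Property (1) is immediate, since each coordinate $g_n$ lies in $B_1(X)$ and every open set of $Y$ is a countable union of finite intersections of sets $\pi_n^{-1}(G)$.

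The crux is property (2). Given a zero-set $T=h^{-1}(0)$ of $X$, I would use that the characteristic function $\chi_{X\setminus T}$ belongs to $B_1(X)$ (its open preimages lie among $\varnothing,T,X\setminus T,X$, and the cozero-set $X\setminus T$ is $Z_{\sigma}$). By sequential density choose $g_{n_k}\in D$ with $g_{n_k}\to\chi_{X\setminus T}$ pointwise, and define $\Phi\colon Y\to\mathbb{R}$ by $\Phi(\varphi(x))=\chi_{X\setminus T}(x)$. The continuous coordinate projections then satisfy $\pi_{n_k}\to\Phi$ pointwise on $Y$, so $\Phi\in B_1(Y)$. Since $\Phi$ takes only the values $0$ and $1$, we get $\varphi(T)=\Phi^{-1}((-\infty,1/2))$, which is an $F_{\sigma}$-set because $\Phi\in B_1(Y)$ and $Y$ is metric; this verifies (2) and gives $X\models OP$. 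I expect this last step — manufacturing, for each zero-set, a Baire class one function on $Y$ whose level set is exactly $\varphi(T)$ — to be the main obstacle, as it is precisely here that the sequential density (and not mere density) of $D$ is essential.
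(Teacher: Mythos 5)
Your proof is correct and follows essentially the same route as the source argument: the paper itself only cites this theorem from \cite{ospy}, but your diagonal map $\varphi=(g_n)_n$ onto $Y\subseteq\mathbb{R}^{\omega}$ is exactly the "topology generated by $\{f^{-1}(G): f\in S\}$" construction that the paper invokes in step $(3)\Rightarrow(2)$ of Theorem \ref{th22}, and your verification of property (2) via the characteristic function $\chi_{X\setminus T}\in B_1(X)$ approximated through sequential density is the same device the paper uses (with the step function $h$) in step $(1)\Rightarrow(2)$ there. The transfer of a countable $S\subset C(Y)$ with $[S]_{seq}=B_1(Y)$ by composition with $\varphi$ in your sufficiency direction likewise matches the paper's method, so there is nothing to correct.
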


\begin{theorem}(Example 3.3  in \cite{ospy})\label{th61}
There is a Tychonoff space $X$ such that $C_p(X)$ is sequentially
separable, but $B_1(X)$ is not.

\end{theorem}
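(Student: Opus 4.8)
The theorem asserts only existence, so the plan is to exhibit one Tychonoff space $X$ that satisfies the Velichko property $V$ but fails the $OP$-property; by Theorem \ref{th38} the former makes $C_p(X)$ sequentially separable, and by Theorem \ref{th6} the latter makes $B_1(X)$ not sequentially separable. Note that such an $X$ must already have $iw(X)=\aleph_0$ (sequential separability of $C_p(X)$ forces separability, hence $iw(X)=\aleph_0$ by Theorem \ref{th31}), so by Theorem \ref{th30} the space $B_1(X)$ is even separable; the example therefore realizes the delicate phenomenon of a separable but not sequentially separable $B_1(X)$. I would look for $X$ among the non-metrizable refinements of a separable metrizable space $M$: take the underlying set of $M$, keep a condensation $\pi:X\to M$ onto $M$, and engineer the refinement so that cozero-sets stay descriptively simple while the zero-sets do not.

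For the Velichko property I would verify that $\pi$ itself is the condensation required by the definition of $V$: it suffices to arrange the refinement so that every basic new open set is, as a subset of $M$, an $F_\sigma$-set, for then $\pi(U)$ is an $F_\sigma$-set of $M$ for every cozero-set $U$ of $X$ (a cozero-set being a countable union of such pieces). Since $M$ is separable metrizable, this gives $X\models V$ and hence, by Theorem \ref{th38}, the sequential separability of $C_p(X)$. This is the routine half of the argument, modeled on the way half-open intervals are simultaneously $F_\sigma$ and $G_\delta$ in the Sorgenfrey line.

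The substantive half is to prove $X\not\models OP$, and this is where the main difficulty lies, because the $OP$-property is witnessed by an \emph{arbitrary} bijection $\varphi:X\to Y$ onto a separable metrizable space, not merely by the condensation $\pi$. The design objective for $X$ is thus a distinguished zero-set $Z$ whose image is forced to be complicated: concretely I would aim for $\pi(Z)$ to be a $G_\delta$-set of $M$ that is not $F_\sigma$ (the prototype being the irrationals inside $\mathbb{R}$), together with a $Z_\sigma$-structure on $X$ too coarse to allow $Z$ to be re-sorted. Supposing $X\models OP$ through some $\varphi$, condition (1) says that $\varphi^{-1}$ of every open set is a $Z_\sigma$-set, i.e. that every coordinate of $\varphi$ belongs to $B_1(X)$, so $\varphi$ is a Baire-class-one bijection; condition (2) says $\varphi(Z)$ is an $F_\sigma$-set of $Y$. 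Composing with $\pi$, the map $h=\varphi\circ\pi^{-1}:M\to Y$ inherits enough two-sided Baire-class-one regularity that the $F_\sigma$/$G_\delta$ distinction should be preserved; the contradiction is then that $\varphi(Z)$ is at once an $F_\sigma$-set (by condition (2)) and, being a faithful image of a $G_\delta$-non-$F_\sigma$ set under such a map, not an $F_\sigma$-set.

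I expect the genuine obstacle to be exactly this invariance claim: one must show that the combination of conditions (1) and (2) pins $\varphi$ down tightly enough (roughly, that both $\varphi$ and $\varphi^{-1}$ are Borel of sufficiently low class, so that completeness of the $G_\delta$ copy of the irrationals cannot be destroyed) while keeping the $Z_\sigma$-structure of $X$ too poor to produce an $F_\sigma$ image of $Z$. Should the topological invariance prove awkward to isolate, an alternative route is to argue directly in the function space: produce an uncountable, suitably ``independent'' family of characteristic functions of ambiguous subsets of $Z$ inside $B_1(X)$ and show that no countable subset of $B_1(X)$ can sequentially approximate all of them, contradicting sequential separability and hence, by Theorem \ref{th6}, yielding $X\not\models OP$ again.
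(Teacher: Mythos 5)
Your overall frame is right and even matches the intended example: the paper proves this theorem purely by citation --- it invokes Example 3.3 of \cite{ospy} and remarks that the Sorgenfrey line (or the Niemytzki plane) works --- and the Sorgenfrey line is exactly the kind of ``non-metrizable refinement of a separable metrizable space'' you describe, with the easy half ($X\models V$, hence $C_p(X)$ sequentially separable by Theorem \ref{th38}) handled just as you say. But your proposal has a genuine gap at precisely the point you flag yourself: the claim that $X\not\models OP$ is never proved. The ``invariance'' heuristic you lean on does not work as stated. Conditions (1) and (2) of the $OP$-property do \emph{not} make $\varphi$ (or $h=\varphi\circ\pi^{-1}$) a first-level Baire isomorphism: condition (1) gives that preimages of \emph{open} sets of $Y$ are $Z_\sigma$, but the preimage of a closed set of $Y$ is then only a countable intersection of cozero-sets, and condition (2) controls images of zero-sets only in one direction. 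So there is no two-sided regularity from which to conclude that a $G_\delta$-not-$F_\sigma$ zero-set $Z$ must have non-$F_\sigma$ image; a priori $\varphi$ could re-sort $Z$ arbitrarily, which is exactly why the quantifier ``for an arbitrary bijection $\varphi$'' is the hard part. Your fallback (an uncountable independent family of characteristic functions not sequentially approximable from a countable set) is likewise only a program, not an argument.

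What actually closes this gap --- both in Example 3.3 of \cite{ospy} and in the analogous theorem proved in full in this paper (the $X=Z^{\aleph_0}$ example for $USC^f_p$) --- is heavy machinery of the Jayne--Rogers type: one shows the hypothetical witnessing map induces a $Z_\sigma$-map from an analytic space, applies the Ka\v{c}ena--Motto Ros--Semmes piecewise continuity theorem (Theorem \ref{th3}) to decompose the space into countably many closed pieces on which the map is a homeomorphism onto metrizable subspaces, and then derives a contradiction from a Baire-category (or smallness-of-metrizable-subspaces) property of the specific space chosen. Nothing in your sketch substitutes for this step, and without it the central assertion $X\not\models OP$ remains unproved; as written, your proposal establishes only the routine half of the theorem.
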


In the above theorem, the promised space $X$ could be, for
example, if $X$ is the Sorgenfrey line (or the Niemytzki plane)
\cite{ospy}.

\section{Main results}

\begin{definition} A space $X$ has {\bf $U$-property} ($X$ $\models$
$U$), if there
 exists  a condensation $f: X \mapsto Y$ from the space $X$ onto a
 separable metric space $Y$, such that $f(D)$ is a $Z_{\sigma}$-set
 of $Y$ for any zero-set $D$ of $X$.
\end{definition}

\begin{theorem}\label{th22} Let $X$ be a Tychonoff space. Then the following statements are
equivalent:

\begin{enumerate}

\item $USC^f_p(X)$ is sequentially separable;

\item $X$ $\models$ $U$;

\item there exists a countable subset $S$ of $C(X)$ such that
$[S]_{seq}=B_1(X)$.

\end{enumerate}

\end{theorem}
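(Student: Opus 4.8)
The plan is to prove the cycle of implications $(1)\Rightarrow(2)\Rightarrow(3)\Rightarrow(1)$. Before starting I would record two elementary facts. First, the inclusions $C(X)\subseteq USC^f(X)\subseteq B_1(X)$ hold: if $f$ is continuous then $\{f<r\}=\{(r-f)^+>0\}$ is a cozero-set, so $f\in USC^f(X)$; and if $f\in USC^f(X)$ then each $\{f<r\}$ is a cozero-set, hence a $Z_\sigma$-set, while $\{f>r\}=\bigcup_n(X\setminus\{f<r+1/n\})$ is also $Z_\sigma$, so $f^{-1}(G)$ is $Z_\sigma$ for every open $G\subseteq\mathbb{R}$ and $f\in B_1(X)$ by the criterion of \cite{rj}. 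Second, any countable family that is sequentially dense in $USC^f_p(X)$ automatically separates the points of $X$: given $x\neq y$, choose $\phi\in C(X)\subseteq USC^f(X)$ with $\phi(x)\neq\phi(y)$, approximate $\phi$ pointwise by a sequence from the family, and read off a member that separates $x$ and $y$.

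For $(3)\Rightarrow(1)$: if $S\subseteq C(X)$ is countable with $[S]_{seq}=B_1(X)$, then since $S\subseteq C(X)\subseteq USC^f(X)\subseteq B_1(X)=[S]_{seq}$, every $g\in USC^f(X)$ is a pointwise limit of a sequence from $S$, so $S$ already witnesses that $USC^f_p(X)$ is sequentially separable. For $(2)\Rightarrow(3)$: let $f:X\to Y$ be the condensation furnished by the $U$-property ($Y$ separable metric), and use the strengthened form of Theorem \ref{th32} to fix a countable $S_0\subseteq C_p(Y)$ with $[S_0]_{seq}=B_1(Y)$; set $S=\{g\circ f:g\in S_0\}\subseteq C(X)$. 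The containment $[S]_{seq}\subseteq B_1(X)$ is immediate since $S\subseteq C(X)$. For the reverse, given $h\in B_1(X)$ I would put $h'=h\circ f^{-1}$, so that for open $G$ one has $(h')^{-1}(G)=f(h^{-1}(G))=\bigcup_n f(T_n)$ where $h^{-1}(G)=\bigcup_n T_n$ is a $Z_\sigma$-decomposition into zero-sets; the $U$-property makes each $f(T_n)$ an $F_\sigma$-set, so $h'\in B_1(Y)$, and pulling back a sequence $g_k\to h'$ from $S_0$ gives $g_k\circ f\to h$, i.e. $h\in[S]_{seq}$.

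The heart of the matter is $(1)\Rightarrow(2)$, where the condensation with the $Z_\sigma$ image property must be constructed by hand. Let $D=\{d_n\}$ be countable and sequentially dense in $USC^f_p(X)$. For each $n$ and each rational $q$ the set $\{d_n<q\}$ is a cozero-set, so I would fix a continuous $c_{n,q}:X\to[0,1]$ with $\{c_{n,q}>0\}=\{d_n<q\}$; by the separation remark the family $\mathcal{C}=\{c_{n,q}\}$ separates points, so the diagonal map $f:X\to Y\subseteq\mathbb{R}^{\mathcal{C}}$ is a condensation onto a separable metric space. To verify the image condition, fix a zero-set $Z$ and note that its indicator $\chi_Z$ lies in $USC^f(X)$; choose a sequence $d_{m_k}\to\chi_Z$ pointwise from $D$. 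Writing $U_k=\{d_{m_k}<1/2\}=\{c_{m_k,1/2}>0\}$, the convergence gives exactly $Z=\bigcup_K\bigcap_{k\ge K}(X\setminus U_k)$. Since each coordinate descends to a continuous function $\tilde c_{m_k,1/2}$ on $Y$ with $f(U_k)=\{\tilde c_{m_k,1/2}>0\}$ open in $Y$, and $f$ is a bijection, applying $f$ to this identity yields $f(Z)=\bigcup_K\bigcap_{k\ge K}(Y\setminus f(U_k))$, a countable union of closed subsets of $Y$; as $Y$ is metric this is a $Z_\sigma$-set, so $X\models U$.

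The main obstacle is precisely this last step: the condensation $f$ is not a homeomorphism, so $f(Z)$ cannot be understood through the topology of $X$ and must be described intrinsically inside $Y$. The device that makes this work is that $\chi_Z\in USC^f(X)$, which lets sequential density convert the single zero-set $Z$ into a countable Boolean combination of the cozero-sets $\{d_{m_k}<1/2\}$ that are literally the supports of coordinate functions of $f$; this is what allows the $F_\sigma$ description of $f(Z)$ to survive the passage through the bijection. I expect the verification of the exact identity $Z=\bigcup_K\bigcap_{k\ge K}(X\setminus U_k)$ (using that $1/2$ lies strictly between the two limit values $0$ and $1$), together with the bookkeeping that these particular coordinates belong to $\mathcal{C}$, to be the only genuinely delicate points.
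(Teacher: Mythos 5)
Your proposal is correct, and while its overall skeleton agrees with the paper, two of the three implications are handled by genuinely different means. The paper makes (2) the hub, proving $(1)\Rightarrow(2)$, $(2)\Rightarrow(1)$, $(3)\Rightarrow(2)$ and $(2)\Rightarrow(3)$; you prove the cycle $(1)\Rightarrow(2)\Rightarrow(3)\Rightarrow(1)$. Your $(3)\Rightarrow(1)$ is a two-line observation from $S\subseteq C(X)\subseteq USC^f(X)\subseteq B_1(X)=[S]_{seq}$, which entirely replaces the paper's route through Theorem 3.1 of \cite{ospy} (the $OP$-property) and the Velichko-based $(2)\Rightarrow(1)$ -- a real simplification. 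In $(1)\Rightarrow(2)$, the paper first deduces $iw(X)=\aleph_0$ from Pestriakov's Theorem \ref{th30}, adjoins the sets $f_i^{-1}((-\infty,r))$ to a countable base of a metric condensation so that the dense functions become upper semicontinuous on $Y$, and handles an arbitrary $Z_\sigma$-set $D=\bigcup_i D_i$ at once via the staircase function $h$ ($h=1$ on $D_1$, $h=\frac{1}{i+1}$ on $D_{i+1}\setminus D_i$, $h=0$ off $D$). You instead embed $X$ diagonally through continuous witnesses $c_{n,q}$ of the cozero-sets $\{d_n<q\}$ and check the $U$-property only for zero-sets via $\chi_Z$ and the $\frac{1}{2}$-threshold; that is exactly what the definition of $U$ asks, and the paper's stronger $F_\sigma$-image statement for $Z_\sigma$-sets follows anyway since countable unions of $Z_\sigma$-sets are $Z_\sigma$. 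Your construction buys something concrete: $Y$, a subspace of $\mathbb{R}^{\mathcal{C}}$ with $\mathcal{C}$ countable, is separable metrizable automatically, whereas the paper's generated topology $\tau$ still requires a (tacit) regularity check before Urysohn metrization applies; using continuous rather than semicontinuous coordinates makes this free. Your $(2)\Rightarrow(3)$ coincides with the paper's, merely unpacking the ``first-level Baire isomorphism'': the $U$-property plus the Lebesgue--Hausdorff characterization on the metric space $Y$ gives $h\circ f^{-1}\in B_1(Y)$. One cosmetic point: passing from ``$D$ separates points'' to ``$\mathcal{C}$ separates points'' needs a rational $q$ strictly between $d_n(x)$ and $d_n(y)$; this is a one-line fill that your rational indexing already provides.
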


\medskip

\begin{proof}
$(1)\Rightarrow(2)$. Assume that $USC^f_p(X)$ is sequentially
separable. Let $A=\{f_i : i\in \mathbb{N}\}$ be a sequentially
dense subset of $USC^f_p(X)$. Note that $f^{-1}(W)$ is a
$Z_{\sigma}$-set of $X$ for an open set $W$ of $\mathbb{R}$ and
$f\in USC^f_p(X)$. It follows that $USC^f_p(X)$ is a dense subset
of $B_1(X)$ and hence $B_1(X)$ is separable. By Theorem
\ref{th30}, $iw(X)=\aleph_0$. Hence there exists  a condensation
from the space $X$ on a separable metric space $M$. Let $\beta$ be
a countable base of the space $M$. Let $\alpha=\{f_i^{-1}(-\infty,
r) : r\in \mathbb{Q}$ and $i\in \mathbb{N} \}\bigcup \beta$ and
let $\tau$ be a topology on $X$ generating $\alpha$. Denote
$Y=(X,\tau)$. Note that there exists a condensation $f: X \mapsto
Y$ from the space $X$ onto a separable metric space $Y$. By
definition of $\alpha$, $f_i\in USC(Y)$ for each $i\in
\mathbb{N}$.

We will prove that $f(D)$ is an $F_{\sigma}$-set of $Y$ for any
$Z_{\sigma}$-set $D$ of $X$. Fix a $Z_{\sigma}$-set
$D=\bigcup\limits_{i\in \mathbb{N}} D_i$  of $X$ where $D_i$ is a
zero-set of $X$ and $D_i\subset D_{i+1}$ for each $i\in
\mathbb{N}$. Define the function $h$:  $h(D_1)=1$,
$h(D_{i+1}\setminus D_i)=\frac{1}{i+1}$ for each $i\in \mathbb{N}$
and $h(X\setminus D)=0$. By construction of $h$, $D=h^{-1}((0,
+\infty))$.

Note that $h\in USC^f(X)$ and hence there are $\{f_{i_k} : k\in
\mathbb{N}\}\subset A$ such that $f_{i_k} \rightarrow h$
($k\rightarrow \infty$). It follows that  $D=h^{-1}((0,
+\infty))=\bigcup\limits_{j\in \mathbb{N}} \bigcap\limits_{i_k>j}
f^{-1}_{i_k}([\frac{1}{j}, +\infty))$ and hence  $f(D)$ is an
$F_{\sigma}$-set of $Y$.

$(2)\Rightarrow(1)$. Assume that $X$ $\models$ $U$, i.e. there
 is  a condensation (one-to-one continuous mapping) $f: X \mapsto Y$ from the space $X$ on a
 separable metric space $Y$, such that $f(D)$ is an $F_{\sigma}$-set
 of $Y$ for any $Z_{\sigma}$-set $D$ of $X$. Then $USC^f_p(X)\subset
 USC_p(Y)\subset B_1(Y)$. By Velichko's Theorem \ref{th32}, there is
 $A=\{f_i: i\in \mathbb{N}\}\subset C_p(Y)$ such that
 $[A]_{seq}=B_1(Y)$. Note that $C_p(Y)\subset C_p(X)\subset
 USC^f_p(X)\subset B_1(Y)$. It follows that $A$ is a countable
 sequentially dense subset of $USC^f_p(X)$.

$(3)\Rightarrow(2)$. Suppose that exists a countable subset $S$ of
$C(X)$ such that $[S]_{seq}=B_1(X)$. Consider a topology $\tau$
generated by the family  $\alpha=\{f^{-1}(G) : G$ is an open
subset of $\mathbb{R}$ and $f\in S \}$. Denote $Y=(X,\tau)$. Let
$\varphi$ be a identity map from $X$ onto $Y$. By Theorem 3.1 in
\cite{ospy}, $\varphi$ is a bijection such that $\varphi(D)$ is a
$Z_{\sigma}$-set of $Y$ for any zero-set $D$ of $X$. Since
$S\subset C(X)$, $\varphi$ is a condensation.

$(2)\Rightarrow(3)$. Let $X$ $\models$ $U$. By Theorem \ref{th32},
there exists a countable dense subset $L$ of $C_p(Y)$ such that
$[L]_{seq}=B_1(Y)$. Then $S=\{f\circ \varphi : f\in L \}$ is a
countable subset of $C(X)$. Let $\varphi^*(h):=h\circ \varphi$ for
$h\in B_1(Y)$. Then  $\varphi^*: B_1(Y)\mapsto B_1(X)$ is a
first-level Baire isomorphism. It follows that $[S]_{seq}=B_1(X)$.

\end{proof}

\medskip

\begin{corollary} Let $X$ be a Tychonoff space and let $USC^f_p(X)$ be
sequentially separable. Then $C_p(X)$ and $B_1(X)$ are
sequentially separable.
\end{corollary}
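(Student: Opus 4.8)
The plan is to treat the two assertions separately, deriving each from the characterizations collected in Theorem \ref{th22} together with the standard criteria for sequential separability of $B_1(X)$ and of $C_p(X)$.

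For $B_1(X)$ the argument is immediate. Since $USC^f_p(X)$ is sequentially separable, condition (3) of Theorem \ref{th22} furnishes a countable set $S\subseteq C(X)$ with $[S]_{seq}=B_1(X)$. As $C(X)\subseteq B_1(X)$, the set $S$ is a countable sequentially dense subset of $B_1(X)$, whence $B_1(X)$ is sequentially separable.

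For $C_p(X)$ I would pass through the Velichko property $V$ and invoke Theorem \ref{th38}. By condition (2) of Theorem \ref{th22} we have $X\models U$, so there is a condensation $\varphi\colon X\to Y$ onto a separable metric space $Y$ carrying every zero-set of $X$ to a $Z_{\sigma}$-set of $Y$. The key step is to upgrade this to $X\models V$, i.e. to show that $\varphi(U)$ is an $F_{\sigma}$-set of $Y$ for every cozero-set $U$ of $X$. First I would record that any cozero-set $U=\{x : g(x)\neq 0\}$, with $g\in C(X)$, is a $Z_{\sigma}$-set, namely $U=\bigcup_{n}\{x : |g(x)|\ge 1/n\}$, each term being a zero-set of $X$. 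Applying the $U$-property to each term and using that $Y$, being separable metric, is perfectly normal (so that its $Z_{\sigma}$-sets coincide with its $F_{\sigma}$-sets), I obtain that $\varphi(U)=\bigcup_{n}\varphi(\{x : |g(x)|\ge 1/n\})$ is a countable union of $F_{\sigma}$-sets of $Y$, hence an $F_{\sigma}$-set. Thus $X\models V$, and Theorem \ref{th38} yields that $C_p(X)$ is sequentially separable.

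The only genuine obstacle is this last passage from $U$ to $V$: the $U$-property is phrased in terms of zero-sets, whereas Velichko's criterion is phrased in terms of cozero-sets, and one cannot simply complement, since the complement of an $F_{\sigma}$ image would a priori be only $G_{\delta}$. The resolution is to exploit that a cozero-set is itself a countable union of zero-sets, so that its image is a countable union of $Z_{\sigma}$-sets of $Y$, and then the perfect normality of the metrizable target keeps everything within the $F_{\sigma}$ class.
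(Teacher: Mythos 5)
Your proposal is correct, and it splits interestingly against the paper's own (very terse) proof, which reads in full: by Theorem \ref{th38}, $C_p(X)$ is sequentially separable; by Theorem \ref{th6}, $B_1(X)$ is sequentially separable. For the $C_p(X)$ half you take the same route as the paper --- through Velichko's criterion (Theorem \ref{th38}) --- but you supply the verification the paper leaves entirely implicit, namely that the $U$-property implies the $V$-property: a cozero-set is a countable union of zero-sets, each image is a $Z_{\sigma}$-set of $Y$, and since the separable metric target $Y$ is perfectly normal its $Z_{\sigma}$-sets are exactly its $F_{\sigma}$-sets; this is precisely the right argument, and your remark that naive complementation fails (an $F_{\sigma}$ image complements to a $G_{\delta}$) correctly identifies why the detour through the zero-set decomposition is needed. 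For the $B_1(X)$ half you genuinely diverge: the paper invokes Theorem \ref{th6}, which requires checking that the $U$-property yields the $OP$-property (condition (1) of $OP$ holds because $\varphi$ is continuous and every open set of the metric space $Y$ is cozero, so its preimage is cozero in $X$, hence $Z_{\sigma}$; condition (2) is the $U$-property itself, again via perfect normality of $Y$) --- a check the paper also leaves to the reader. Your argument instead uses condition (3) of Theorem \ref{th22} directly: the countable $S\subseteq C(X)\subseteq B_1(X)$ with $[S]_{seq}=B_1(X)$ is itself a countable sequentially dense subset of $B_1(X)$. This is more elementary (no appeal to the $OP$-criterion at all) and in fact delivers slightly more than sequential separability, namely a sequentially dense set consisting of \emph{continuous} functions --- the ``stronger form'' of sequential separability the paper emphasizes in its abstract; the paper's route, by contrast, keeps both halves uniform as applications of the two published criteria.
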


\begin{proof} By Theorem \ref{th38}, $C_p(X)$ is sequentially separable.
By Theorem \ref{th6}, $B_1(X)$ is sequentially separable.
\end{proof}

\medskip

\begin{corollary} Let $X$ be a perfectly normal space. Then the following statements are
equivalent:

\begin{enumerate}

\item $USC_p(X)$ is sequentially separable;

\item $X$ $\models$ $U$;

\item there exists a countable subset $S$ of $C(X)$ such that
$[S]_{seq}=B_1(X)$.

\end{enumerate}

\end{corollary}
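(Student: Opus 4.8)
The plan is to deduce this corollary directly from Theorem~\ref{th22} by observing that the hypothesis of perfect normality collapses the distinction between $USC^f_p(X)$ and $USC_p(X)$. Statements (2) and (3) of the corollary are literally the same as statements (2) and (3) of Theorem~\ref{th22}, so the only point requiring justification is that statement (1) of the corollary agrees with statement (1) of that theorem, i.e. that $USC^f_p(X)=USC_p(X)$ when $X$ is perfectly normal.

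To establish this identification I would verify both inclusions between the underlying function sets. The inclusion $USC^f(X)\subseteq USC(X)$ holds for every Tychonoff space, since a cozero-set is in particular open, so requiring each sublevel set $f^{-1}((-\infty,r))$ to be a cozero-set is a stronger demand than requiring it merely to be open. For the reverse inclusion I would invoke the characterization of perfect normality by which every open subset of $X$ is a cozero-set (equivalently, every closed set is a zero-set). Given $f\in USC(X)$, each set $f^{-1}((-\infty,r))$ is open, hence a cozero-set, so $f\in USC^f(X)$. Thus $USC^f(X)=USC(X)$ as sets, and since the pointwise convergence topology on each is the subspace topology inherited from $\mathbb{R}^X$, the two function spaces coincide: $USC^f_p(X)=USC_p(X)$. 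This is exactly the observation already recorded in the remark preceding Theorem~\ref{th22}.

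Once this identification is in hand, the three conditions of the corollary become identical to the three equivalent conditions of Theorem~\ref{th22}, and the equivalence follows immediately. I do not anticipate any substantive obstacle in this argument: its entire weight rests on the standard equivalence between perfect normality and the property that open sets are cozero-sets, so the corollary is genuinely a one-line consequence of Theorem~\ref{th22}.
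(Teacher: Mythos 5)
Your proposal is correct and follows exactly the route the paper intends: the paper states this corollary without proof, relying on its earlier remark that $USC^f(X)=USC(X)$ for perfectly normal $X$, and you have simply spelled out that identification (open sets are cozero-sets in a perfectly normal space) before citing Theorem~\ref{th22}. Nothing further is needed.
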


A continuous image of sequentially separable space is sequentially
separable. Hence {\it cosmic} spaces - the continuous images of
separable metric spaces (space with a countable network) - are
sequentially separable. So, for any separable metric space $X$ (or
more generally, cosmic $X$), $C_p(X)$ is cosmic, and hence
sequentially separable.

\begin{corollary} Let $X$ be a separable metrizable space. Then:

\begin{enumerate}

\item $USC_p(X)$ is sequentially separable;

\item $B_1(X)$ is sequentially separable;

\item there exists a countable subset $S$ of $C(X)$ such that
$[S]_{seq}=B_1(X)$.

\end{enumerate}

\end{corollary}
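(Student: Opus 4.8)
The plan is to obtain all three conclusions directly from Theorem~\ref{th32} together with the remark immediately following it, using only the elementary fact that every separable metrizable space is perfectly normal.

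First I would dispose of statements (2) and (3), which are essentially the content of the cited theorem. Theorem~\ref{th32} asserts that $B_1(X)$ is sequentially separable whenever $X$ is separable metric, which is exactly (2). The remark recorded after that theorem strengthens the conclusion: the witnessing countable sequentially dense set may be chosen inside $C_p(X)$, i.e. there is a countable $S\subseteq C(X)$ with $[S]_{seq}=B_1(X)$. This is precisely (3). Thus no real work is needed for (2) and (3) beyond quoting the theorem and its refinement.

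For (1) the one point to verify is that the space $USC_p(X)$ appearing here coincides with the space $USC^f_p(X)$ treated in Theorem~\ref{th22}. Since $X$ is metrizable it is perfectly normal, and as observed in the introduction $USC^f(X)=USC(X)$ for every perfectly normal space; hence $USC^f_p(X)=USC_p(X)$. Now I would invoke the equivalence $(1)\Leftrightarrow(3)$ of Theorem~\ref{th22}: having already produced in (3) a countable $S\subseteq C(X)$ with $[S]_{seq}=B_1(X)$, we conclude that $USC^f_p(X)$ is sequentially separable, and therefore so is $USC_p(X)$. Equivalently, since $X$ is perfectly normal the preceding Corollary applies verbatim and already packages this identification, so (3) yields (1) at once.

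The argument carries no genuine obstacle; the only place demanding attention is the identification $USC^f_p(X)=USC_p(X)$, which rests entirely on the perfect normality of metrizable spaces. Once this is in hand, every assertion reduces to Theorem~\ref{th32}, its noted strengthening, and the equivalence already established in Theorem~\ref{th22}.
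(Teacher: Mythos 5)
Your proposal is correct and follows essentially the same route as the paper, which states this corollary without proof precisely because it is the specialization of the preceding results: (2) and (3) come from Theorem~\ref{th32} and the remark after it, and (1) follows since metrizable spaces are perfectly normal, so $USC^f_p(X)=USC_p(X)$ and the equivalence of Theorem~\ref{th22} (equivalently, the perfectly normal corollary, or simply the chain $S\subset C_p(X)\subset USC_p(X)\subset B_1(X)$ noted in the introduction) applies. Your identification of the perfect-normality step as the only point needing verification is exactly right.
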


Recall that an analytic space is a metrizable space that is a
continuous image of a Polish space.

A map $f: X\rightarrow Y$ be called $Z_{\sigma}$-map, if
$f^{-1}(Z)$ is a $Z_{\sigma}$-set of $X$ for any zero-set $Z$ of
$Y$.

\medskip

We need the following theorem as a special case of the Theorem 1
in \cite{kms}.

\begin{theorem}(\cite{kms})\label{th3} Suppose that $\varphi: L \mapsto S$ be a
$Z_{\sigma}$-mapping from an analytic space $L$ onto a cosmic
space $S$. Then $\varphi$ is piecewise continuous.
\end{theorem}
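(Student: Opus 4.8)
The plan is to reduce the statement to the classical Jayne--Rogers decomposition theorem, the only genuine extra difficulty being that the range $S$ is merely cosmic rather than metrizable. First I would translate the hypothesis into a first-level Borel condition on $\varphi$. Since $S$ is cosmic it is regular with a countable network, hence hereditarily Lindel\"of and therefore perfectly normal; consequently the zero-sets of $S$ are exactly its closed sets and the cozero-sets are exactly its open sets. On the other side $L$ is analytic, hence separable metrizable, so a subset of $L$ is a $Z_{\sigma}$-set if and only if it is an $F_{\sigma}$-set. Feeding this into the definition of a $Z_{\sigma}$-mapping shows that $\varphi^{-1}(F)$ is an $F_{\sigma}$-set of $L$ for every closed $F\subseteq S$, and hence (writing an $F_{\sigma}$-set as a countable union of closed sets) that $\varphi^{-1}(H)$ is $F_{\sigma}$ in $L$ for every $F_{\sigma}$-set $H\subseteq S$. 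This is precisely the first-level Borel condition that drives the Jayne--Rogers theorem, now phrased for a cosmic range.

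Second, I would manufacture a metric companion of $\varphi$ and decompose it. As $S$ is cosmic we have $iw(S)=\aleph_{0}$, so I fix a condensation $c\colon S\to M$ onto a separable metric space $M$ together with a countable network $\mathcal{N}=\{N_{k}:k\in\mathbb{N}\}$ of $S$. The composite $c\circ\varphi\colon L\to M$ still pulls closed sets back to $F_{\sigma}$-sets, since a closed set of $M$ has closed $c$-preimage in $S$, whose $\varphi$-preimage is $F_{\sigma}$ in $L$. As $M$ is separable metric and $L$ is analytic, the classical Jayne--Rogers theorem applies to $c\circ\varphi$ and yields a countable cover $L=\bigcup_{n}L_{n}$ by closed subsets of $L$ on each of which $c\circ\varphi$ restricts continuously.

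Third --- and this is the step I expect to be the main obstacle --- I would have to upgrade the metric continuity of $(c\circ\varphi)|_{L_{n}}$ to genuine continuity of $\varphi|_{L_{n}}$ into the finer topology of $S$, and to do so with only countably many pieces. The difficulty is intrinsic: the condensation $c$ discards exactly the information distinguishing the cosmic topology of $S$ from its metric quotient, and that information is not carried by any countable family of open sets, because a cosmic space need not be second countable. The remedy is to exploit the network: for each $N_{k}$ I pass to the closed set $\overline{N_{k}}$, observe that $\varphi^{-1}(\overline{N_{k}})$ is $F_{\sigma}$ by the first-level Borel property established in the first step, and refine the cover so that on each new piece $\varphi$ cannot leave $\overline{N_{k}}$ while meeting $N_{k}$ (using regularity of $S$ to insert $\varphi(x)\in N_{k}\subseteq\overline{N_{k}}\subseteq U$ for a given open $U\ni\varphi(x)$). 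Carrying out this refinement for all $k$ simultaneously, while keeping the cover countable and the pieces closed (hence $F_{\sigma}$), is the delicate rank/Baire-category argument that constitutes the real substance of Theorem~1 in \cite{kms}; once it is completed, intersecting the refinement with $\{L_{n}\}$ leaves each restriction $\varphi|_{L_{n}}$ continuous, so that $\varphi$ is piecewise continuous. I therefore expect essentially all the work to lie in this target-side refinement, the domain side being a direct appeal to the Jayne--Rogers theorem made available by the analyticity of $L$.
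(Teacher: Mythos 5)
A preliminary point: the paper does not prove this theorem at all --- it is imported verbatim, with the remark ``we need the following theorem as a special case of the Theorem 1 in \cite{kms}'' --- so there is no internal proof to compare yours with, and your sketch must stand as a self-contained argument. Its first two steps are correct: since $S$ is cosmic it is regular and hereditarily Lindel\"of, hence perfectly normal, so its zero-sets are exactly its closed sets; since $L$ is separable metrizable, its $Z_{\sigma}$-sets are exactly its $F_{\sigma}$-sets; thus the hypothesis reads ``$\varphi^{-1}(F)$ is $F_{\sigma}$ for every closed $F\subseteq S$'', and the classical Jayne--Rogers theorem does apply to $c\circ\varphi$. But, as you yourself anticipate, that application buys essentially nothing, because the condensation $c$ discards precisely the part of the topology of $S$ that the theorem is about; and your third step --- the only place where the theorem could actually be proved --- is not an argument: you describe the needed target-side refinement and then declare it to be ``the real substance of Theorem 1 in \cite{kms}''. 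That is circular: the proposal reduces the theorem to its own core and stops.

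Worse, the refinement criterion you formulate would fail even if pursued. You ask for a countable closed cover such that, for all $k$ simultaneously, any piece on which $\varphi$ meets $N_k$ stays inside $\varphi^{-1}(\overline{N_k})$. Test this on the trivial case $\varphi=\mathrm{id}$, $L=S=\mathbb{R}$, with the network of rational intervals $N=(a,b)$: by Baire category, in any countable closed cover of $\mathbb{R}$ some piece $P$ contains an interval $[u,v]$ with $u<v$, and then $P$ meets $(a,b)$ for rationals $u<a<b<v$ without $\varphi(P)=P$ being contained in $\overline{N}=[a,b]$. So no countable closed cover satisfies your condition even for a continuous map. What the theorem actually needs is the much weaker, pointwise-existential property: on each piece $P$, for each $x\in P$ and each open $U\ni\varphi(x)$ there should exist \emph{some} $k$ with $\varphi(x)\in N_k$, $\overline{N_k}\subseteq U$, and $x$ in the interior, relative to $P$, of the $F_{\sigma}$-set $\varphi^{-1}(\overline{N_k})$. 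Producing a countable closed cover realizing this local selection is exactly the content of Theorem 1 of \cite{kms}, and it cannot be extracted by a second appeal to Jayne--Rogers either: the coding map into $2^{\mathbb{N}}$ whose coordinates are the characteristic functions of the sets $\varphi^{-1}(\overline{N_k})$ has $F_{\sigma}$ preimages of open sets only, while its preimages of closed sets are a priori countable intersections of $F_{\sigma}$-sets, so the Jayne--Rogers hypothesis fails for it. In short, your sketch correctly locates the difficulty but neither solves it nor formulates it in a form that could be solved; the Motto Ros--Semmes style machinery adapted in \cite{kms} is genuinely required.
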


\begin{theorem} There exists a Tychonoff space $X$ such that
$C_p(X)$ and $B_1(X)$ are sequentially separable, but $USC^f_p(X)$
is not.
\end{theorem}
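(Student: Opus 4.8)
The plan is to produce the space through the characterizations already assembled in the paper rather than by analysing $USC^f_p$ directly. By Theorem~\ref{th38} and Theorem~\ref{th6} it suffices to build a Tychonoff space $X$ satisfying the Velichko property $V$ and the property $OP$ but \emph{failing} the property $U$; then $C_p(X)$ and $B_1(X)$ are sequentially separable while, by Theorem~\ref{th22}, $USC^f_p(X)$ is not. Thus the problem splits into two essentially independent tasks: arranging the two positive properties, and obstructing $U$. For the construction I would take $X$ to be a mild refinement of a Polish (hence analytic) space $Y$, with $id\colon X\to Y$ a condensation, and I would make sure that $X$ is a \emph{continuous image of a Polish space} (so, being Tychonoff, it is also cosmic).

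For the positive side, property $V$ is obtained by checking that every cozero-set of $X$ has $F_\sigma$-image in $Y$ (equivalently, that the refinement only turns zero-sets into $G_\delta$-sets of $Y$); this gives $C_p(X)$ sequentially separable. Property $OP$ is obtained not from a condensation but from a genuinely \emph{discontinuous} bijection $\varphi\colon X\to Y'$ onto a separable metric space that ``resolves'' the refined points, so that each zero-set of $X$ acquires an $F_\sigma$-image while preimages of open sets stay $Z_\sigma$. Pulling back Velichko's countable family from Theorem~\ref{th32} along $\varphi$ then yields a countable sequentially dense subset of $B_1(X)$ made of (necessarily discontinuous) Baire-one functions, so $B_1(X)$ is sequentially separable. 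The whole point is that this sorting of the zero-sets can be performed measurably but not continuously.

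The heart of the argument is the failure of $U$, and here I would use Theorem~\ref{th3}. Suppose for contradiction that $USC^f_p(X)$ is sequentially separable. By Theorem~\ref{th22} there is a condensation $g\colon X\to Z$ onto a separable metric space with $g(D)$ an $F_\sigma$-set of $Z$ for every zero-set $D$ of $X$. Since $X$ is a continuous image of a Polish space, so is $Z=g(X)$, hence $Z$ is analytic. Reading the condition on $g$ backwards, $g^{-1}\colon Z\to X$ sends each zero-set of $X$ to an $F_\sigma$-set, i.e.\ a $Z_\sigma$-set, of $Z$, so $g^{-1}$ is a $Z_\sigma$-mapping from the analytic space $Z$ onto the cosmic space $X$. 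By Theorem~\ref{th3}, $g^{-1}$ is piecewise continuous: $Z=\bigcup_n Z_n$ with each $g^{-1}\!\restriction Z_n$ continuous and each $Z_n$ closed in $Z$. Then every $g\!\restriction g^{-1}(Z_n)$ is a continuous bijection with continuous inverse, hence a homeomorphism onto $Z_n\subseteq Z$, so $X=\bigcup_n g^{-1}(Z_n)$ is a countable union of closed metrizable subspaces. To defeat this I would design $X$ to be a Baire space in which the points of uncountable character are dense (no nonempty open subspace is metrizable). Then, by the Baire category theorem, some closed piece $g^{-1}(Z_n)$ has nonempty interior; being metrizable, it exhibits a nonempty open metrizable subspace of $X$, contradicting the density of the points of uncountable character. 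This contradiction shows $U$ fails and completes the proof.

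The main obstacle, where almost all the work lies, is reconciling the two sides of the construction. The space must be refined \emph{everywhere} — densely many points of uncountable character, nowhere locally metrizable — so that it cannot be a countable union of metrizable subspaces, yet the refinement must remain tame enough that the zero-sets can still be sorted to $F_\sigma$-sets by a single Baire-one bijection, so that $OP$ holds. Equivalently, one must make the continuous functions on $X$ genuinely insufficient to sequentially generate $B_1(X)$ while keeping the discontinuous Baire-one functions sufficient. Exhibiting a fan-type continuous image of a Polish space with exactly this asymmetry — and verifying that it is cosmic, as Theorem~\ref{th3} requires — is the delicate step; the verifications of $V$, $OP$, and the non-$\sigma$-metrizability are then the routine (if laborious) consequences.
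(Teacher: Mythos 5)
Your second half --- the obstruction to property $U$ --- is exactly the paper's argument: assume $USC^f_p(X)$ is sequentially separable, get a condensation $g\colon X\to Z$ from Theorem \ref{th22}, note $Z$ is analytic because $X$ is a continuous image of a Polish space, observe that $g^{-1}$ is a $Z_\sigma$-mapping onto the cosmic space $X$, apply Theorem \ref{th3} to write $X$ as a countable union of closed metrizable subspaces, and contradict Baireness plus nowhere-metrizability. You reproduced this chain correctly, including the detail that $g\restriction g^{-1}(Z_n)$ is a homeomorphism. But the theorem is an existence statement, and your proposal never exhibits the space. You explicitly defer the construction (``exhibiting a fan-type continuous image of a Polish space with exactly this asymmetry \ldots is the delicate step''), and you give no candidate space, no verification of $V$, and no verification of $OP$ --- in particular no actual ``discontinuous bijection resolving the refined points.'' Since arranging $OP$ (equivalently, sequential separability of $B_1(X)$) simultaneously with the failure of $U$ is precisely the content of the theorem, what remains is not routine bookkeeping but the whole proof; as written, the proposal is a correct reduction plus an unfulfilled promissory note.

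The paper closes the gap with a concrete example: $X=Z^{\aleph_0}$ where $Z=\mathbb{N}\cup\{p\}$ for an ultrafilter point $p\in\beta\mathbb{N}\setminus\mathbb{N}$. Since $Z$ is a continuous image of $\mathbb{N}$, the product $X$ is a continuous image of the irrationals $\mathbb{P}$, hence cosmic --- which gives both the hypothesis needed for Theorem \ref{th3} and the sequential separability of $C_p(X)$ (the paper notes $C_p(X)$ is itself cosmic for cosmic $X$); and $X$ is Baire while no nonempty open subset of $X$ is metrizable, which is what the piecewise-continuity argument contradicts. Note that this example is \emph{not} of the shape you propose: it is not a refinement of a Polish topology on a Polish set, but a countable product whose single non-metrizable factor spreads non-metrizability to every open set while keeping the space a nice continuous image of $\mathbb{P}$. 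Your template of ``mildly refining a Polish space everywhere'' runs against exactly the tension you identify at the end --- the same Jayne--Rogers-type rigidity (Theorem \ref{th3}) that kills $U$ also constrains how a measurable-but-not-continuous sorting of zero-sets could exist over a refined Polish base --- so without a worked example there is no evidence your intended construction can be carried out, whereas the product construction sidesteps the issue.
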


\begin{proof}

Let $X=Z^{\aleph_0}$ where $Z=\mathbb{N}\cup \{p\}$ for $p\in
\mathbb{N}^*=\beta \mathbb{N}\setminus \mathbb{N}$.

Assume that $USC^f_p(X)$ is sequentially separable. Then, by
Theorem \ref{th22}, there exists a condensation $f: X \mapsto Y$
from the space $X$ on a
 separable metrizable space $Y$, such that $f(D)$ is an $F_{\sigma}$-set
 of $Y$ for any $Z_{\sigma}$-set $D$ of $X$. Since $Z$ is a
 continuous image of $\mathbb{N}$, $X$ is a continuous image
 of irrational numbers $\mathbb{P}$, i.e. there is a continuous mapping $\alpha: \mathbb{P}\mapsto
 X$ from $\mathbb{P}$ onto the space $X$. It follows that $f\circ \alpha :
 \mathbb{P} \mapsto Y$ is a continuous mapping, and  hence  $Y$ is an
 analytic space.  By Theorem \ref{th3},  $f^{-1}:Y\mapsto X $ is a piecewise continuous function (i.e. $Y$
admits a closed and disjoint cover $\mathcal{F}=\{F_n :n\in
\mathbb{N}\}$, such that for each $F_n\in \mathcal{F}$ the
restriction $f^{-1}|F_n$ is continuous function). It follows that
$f^{-1}|F_n : F_n \mapsto f^{-1}(F_n)$ is a homeomorphism, and
hence $X=\bigcup\limits_{F_n\in \mathcal{F}} f^{-1}(F_n)$ where
$f^{-1}(F_n)$ is a separable metrizable space for each $n\in
\mathbb{N}$. Since non-empty open set of $X$ is not metrizable,
$f^{-1}(F_n)$ is a closed nowhere dense subset of $X$ for each
$n\in \mathbb{N}$. But $X$ is a Baire space, a contradiction.

\end{proof}

\section{Open questions}

{\bf Question 1}. Suppose that $B_1(X)$ is sequentially separable.
Is then $C_p(X)$ sequentially separable ?

\bigskip

{\bf Question 2}. Suppose that $B_1(X)$ is sequentially separable.
Is then exist first-level Baire isomorphism $F: X\rightarrow M$
between $X$ and a separable metrizable space $M$ ?
\bigskip







{\bf Question 3}. Suppose that $f: X\mapsto Y$ is a first-level
Baire isomorphism between $X$ and a separable metrizable space
$Y$. Is then $C_p(X)$ sequentially separable ?

\medskip

{\bf Acknowledgment.} The authors are grateful to Sergey V.
Medvedev and the anonymous referee for making several suggestions
which improved this paper.





\bibliographystyle{model1a-num-names}
\bibliography{<your-bib-database>}







\end{document}